\def\e{{\mathrm{e}}}
\def\S{{\mathcal S}}
\def\R{{\mathcal R}}
\begin{document}

\title*{Continuous-time autoregressive moving-average processes in Hilbert space}
\titlerunning{Hilbert-valued CARMA processes} 
\author{Fred Espen Benth and Andr\'e S\"uss}
\institute{Fred Espen Benth \at University of Oslo, Department of Mathematics, POBox 1053 Blindern, N-0316 Oslo, Norway \email{fredb@math.uio.no} \and
Andre S\"uss \at Universitat de Barcelona, Facultat de Matematiques, Gran Via, 585, 
E-08007 Barcelona, Spain \email{suess.andre@web.de}}
%
%
\maketitle

\abstract*{We introduce the class of continuous-time autoregressive moving-average (CARMA) processes in Hilbert spaces. As driving noises of these processes we consider L\'evy processes in Hilbert space. We provide the basic definitions, show relevant properties of these processes and establish the equivalents of CARMA processes on the real line. Finally, CARMA processes in Hilbert space are linked to the stochastic wave equation and functional autoregressive processes.}

\abstract{We introduce the class of continuous-time autoregressive moving-average (CARMA) processes in Hilbert spaces. As driving noises of these processes we consider L\'evy processes in Hilbert space. We provide the basic definitions, show relevant properties of these processes and establish the equivalents of CARMA processes on the real line. Finally, CARMA processes in Hilbert space are linked to the stochastic wave equation and functional autoregressive processes.}

\section{Introduction}
\label{sec:intro}
Continuous-time autoregressive moving-average processes, or CARMA for short, play an important role in modelling the stochastic dynamics of various phenomena like wind speed, temperature variations and 
economic indices. For example, based on such models, in \cite{Andresen} the author analyses fixed-income markets while in \cite{BSB} and \cite{HLC} the dynamics of weather factors at various locations in Europe and Asia are modelled. Finally, in \cite{kluppel}, \cite{bnbv} and \cite{Prok} continuous-time autoregressive models for commodity markets like power and oil are studied.  

CARMA processes constitute the continuous-time version of autoregressive moving-average time series models. In this paper we generalize these processes to a Hilbert space context. The crucial ingredient in the extension is a "multivariate" Ornstein-Uhlenbeck process with values in a Hilbert space. There already exists an analysis of infinite dimensional L\'evy-driven Ornstein-Uhlenbeck processes, and we
refer the reader to the survey \cite{applebaum}. Moreover, matrix-valued operators and their semigroups play an important role. In \cite{EN} a detailed semigroup theory for such operators is developed. We review some of the results from \cite{applebaum} and \cite{EN} in the context of Hilbert-valued CARMA processes, as well as providing some new results for these processes.  

Let us recall the definition of a real-valued CARMA process. We follow \cite{brockwell} and first introduce the multivariate Ornstein-Uhlenbeck process $\{\vec Z(t)\}_{t\geq 0}$ with values in $\mathbb R^p$ for $p\in\mathbb N$ by
\begin{equation}
\label{eq:CARMA1d}
\D \vec Z(t)=C_p\vec Z(t)\D t+\vec e_p\D L(t),\ \vec Z(0)=\vec Z_0\in\mathbb R^p.
\end{equation}
Here, $L$ is a one-dimensional square integrable L\'evy process with zero mean defined on a complete probability space
$(\Omega,\mathcal F, P)$ with filtration $\mathcal F=\{\mathcal F_t\}_{t\geq 0}$, satisfying the usual hypotheses. 
Furthermore, $\vec e_i$ is the $i$th canonical unit vector
in $\mathbb R^p$, $i=1,\ldots,p$. The $p\times p$ matrix $C_p$ takes the particular form
\begin{equation}
\label{eq:real-car-matrix}
C_p=\left[\begin{array}{ccccccc} 
0 & 1 &            0 & .  & . & . & 0 \\
0 & 0            & 1 & 0 & . & . & 0 \\
.  & .             & .            & .  & . & . & . \\
.  & .             & .            & .  & . & . & . \\
0 & .             & .            & . &  .&  . & 1 \\
-\alpha_p & -\alpha_{p-1} & . & . & . & . & -\alpha_1
\end{array}\right],
\end{equation}
for constants $\alpha_i>0$, $i=1,\ldots,p$.\footnote{The odd labelling of these constants is explained by the 
relationship with autoregressive time series, where $\alpha_i$ is related (but not one-to-one) to the regression of lag $i$, $i=1,\ldots,p$.} 

We define a continuous-time autoregressive process of order $p$ by 
\begin{equation}
X(t)=\vec e_1^{\top}\vec Z(t),\ t\geq 0,
\end{equation}
where $\vec x^{\top}$ means the transpose of $\vec x\in\mathbb R^p$. We say that $\{X(t)\}_{t\geq 0}$ is
a CAR($p$)-process.  For $q\in\mathbb N$ with $p>q$, we define a
CARMA($p,q$)-process by
\begin{equation}
\label{eq:carma-real}
X(t)=\vec b^{\top}\vec Z(t),\ t\geq 0,
\end{equation}
where $\vec b\in\mathbb R^p$ is the vector $\vec b=(b_0,b_1,...,b_{q-1},1,0,..,0)^{\top}\in\mathbb R^p$, where
$b_q=1$ and $b_i=0, i=q+1,\ldots,p-1$. Note that $\vec b=\vec e_1$ yields a
CAR($p$)-process. Using the Euler-Maryuama approximation scheme, the CARMA($p,q$)-process 
$\{X(t)\}_{t\geq 0}$ on a discretized time grid can be related to an autoregressive moving average time series process of order 
$p,q$ (see \cite[Eq.~(4.17)]{BSB}). An explicit dynamics of the CARMA($p,q$)-process $\{X(t)\}_{t\geq 0}$ is (see
e.g. \cite[Lemma~10.1]{BSBK})
\begin{equation}
X(t)=\vec b^{\top}\exp(tC_p)\vec Z_0+\int_0^t\vec b^{\top}\exp((t-s)C_p)\vec e_p\D L(s),
\end{equation}
where $\exp(tC_p)$ is the matrix exponential of $tC_p$, the matrix $C_p$ multiplied by time $t$. 
 
If $C_p$ has only eigenvalues with negative real part, then the process $X$ admits a limiting
distribution $\mu_X$ with characteristic exponent (see \cite{brockwell})
$$
\widehat{\mu}_X(z):=\lim_{t\rightarrow\infty}\log\mathbb E\left[\E^{\mathrm{i}zX(t)}\right]
=\int_0^{\infty}\psi_L\left(\vec{b}^{\top}\exp(sC_p)\vec{e}_p z\right)\D s.
$$
Here, $\Psi_L$ is the log-characteristic function of $L(1)$ (see e.g. \cite{applebaum-book})
and $\log$ the distinguished logarithm (see e.g. \cite[Lemma~7.6]{sato}).
In particular, if $L=\sigma B$ with $\sigma>0$ constant and $B$ a standard Brownian motion, we find
$$
\widehat{\mu}_X(z)=-\frac12z^2\sigma^2\int_0^{\infty}(\vec{b}^{\top}\exp(sC_p)\vec{e}_p)^2\D s\,,
$$  
and thus $X$ has a Gaussian limiting distribution $\mu_X$ with zero mean and variance $\sigma^2\int_0^{\infty}(\vec{b}^{\top}\exp(sC_p)\vec{e}_p)^2\D s$. 

When $X$ admits a limiting distribution, we have a stationary representation of the process 
$X$ such that $X(t)\sim\mu_X$ for all $t\in\mathbb R$, namely,
\begin{equation}
X(t)=\int_{-\infty}^t\vec{b}^{\top}\exp((t-s)C_p)\vec{e}_p\D L(s),
\end{equation}
where $L$ is now a two-sided L\'evy process. This links CARMA($p,q$)-processes to the more
general class of L\'evy semistationary (LSS) processes, defined in \cite{bnbv} as
\begin{equation}
X(t):=\int_{-\infty}^tg(t-s)\sigma(s)\D L(s),
\end{equation}
for $g:\mathbb R_+\rightarrow\mathbb R$ being a measurable function and $\sigma$ a predictable
process such that $s\mapsto g(t-s)\sigma(s)$ for $s\leq t$ is integrable with respect to $L$.
Indeed, LSS processes are again a special case of so-called \textit{ambit fields}, which are 
spatio-temporal stochastic processes originally developed in \cite{BNS} for modelling turbulence.
In fact, the infinite dimensional CARMA processes that we are going to define in this paper will form
a subclass of ambit fields, as we will see in Section~\ref{sec:ambit}. We note that CARMA processes
with values in $\mathbb R^n$ have been defined and analysed by \cite{MarqStelz},
\cite{SchlemmStelz} and recently in \cite{Kevei}.


\section{Definition of CARMA processes in Hilbert space}
\label{sec:carma}

Given $p\in\mathbb N$, and let $H_i$ for $i=1,\ldots, p$ be separable Hilbert spaces with 
inner products denoted  by $\langle\cdot,\cdot\rangle_i$ and associated norms $|\cdot|_i$. We define
the product space $H:=H_1\times\ldots\times H_p$, which is again
a separable Hilbert space equipped with the inner product 
$\langle \vec x,\vec y\rangle:=\sum_{i=1}^p\langle x_i,y_i\rangle_i$ and the induced norm denoted $|\cdot|=\sum_{i=1}^n |\cdot|_i$ for 
$\vec x=(x_1,\ldots,x_p),\vec y=(y_1,\ldots,y_p)\in H$. The projection operator 
$\mathcal P_i:H\rightarrow H_i$ is defined as $\mathcal P_i\vec x=x_i$ for $\vec x\in H, i=1,\ldots,p$. 
It is straightforward to see that its adjoint $\mathcal P^*_i:H_i\rightarrow H$ is given by
$\mathcal P_i^*x=(0,\ldots,0,x,0,\ldots,0)$ for $x\in H_i$, where the $x$ appears in the $i$th 
coordinate of the vector consisting of $p$ elements.  If $U$ and $V$ are two separable Hilbert spaces, we denote
$L(U,V)$ the Banach space of bounded linear operators from $U$ to $V$, equipped with the operator norm
$\|\cdot\|_{\text{op}}$. The Hilbert-Schmidt norm for operators in $L(U,V)$ is denoted $\|\cdot\|_{\text{HS}}$, and $L_2(U,V)$ denotes the space of Hilbert-Schmidt operators. If $U=V$, we simply write $L(U)$ for $L(U,U)$. 

Let $A_i:H_{p+1-i}\rightarrow H_{p}, i=1,\ldots,p$ be $p$ (unbounded) densely defined linear operators, and
$I_i:H_{p+2-i}\rightarrow H_{p+1-i}, i=2,\ldots,p$ be another $p-1$ (unbounded) densely defined linear operators. 
Define the linear operator $\mathcal C_p: H\rightarrow H$ represented as a $p\times p$ matrix of operators
\begin{equation}
\label{carma-matrix-op-C}
\mathcal C_p=\left[\begin{array}{ccccccc} 
0 & I_p &            0 & .  & . & . & 0 \\
0 & 0            & I_{p-1} & 0 & . & . & 0 \\
.  & .             & .            & .  & . & . & . \\
.  & .             & .            & .  & . & . & . \\
0 & .             & .            & . &  .&  . & I_2 \\
A_p & A_{p-1} & . & . & . & . & A_1
\end{array}\right].
\end{equation}
Since the $A_i$'s and $I_i$'s are densely defined, $\mathcal C_p$ has domain 
$$
Dom(\mathcal C_p)=Dom(A_p)\times(Dom(A_{p-1})\cap Dom(I_p))\times\ldots \times (Dom(A_1)\cap Dom(I_2)),
$$
which we suppose is dense in $H$. 
We note in passing that typically, $H_1=H_2=\ldots=H_p$ and $I_i=\text{Id}$, the identity operator on $H_i$, $i=1,\ldots,p$. Then
$Dom(\mathcal C_p)=Dom(A_p)\times Dom(A_{p-1})\times\ldots \times Dom(A_1)$, which is dense in $H$.

On a complete probability space $(\Omega,\mathcal F,P)$ with filtration $\mathcal F=\{\mathcal F_t\}_{t\geq 0}$ satisfying the usual
hypotheses, denote by $L:=\{L(t)\}_{t\geq 0}$ a zero-mean square-integrable $H_p$-valued L\'evy process with covariance operator 
$Q$ (i.e., a symmetric non-negative definite trace class operator), see e.g. \cite[Sect.~4.9]{PZ}. 
Consider the following stochastic differential equation. For $t\geq 0$,
\begin{equation}
\label{def-OU-H}
\D\vec Z(t)=\mathcal C_p\vec Z(t)\D t+\mathcal P^*_p\D L(t),\ \vec Z(0):=\vec Z_0\in H.
\end{equation}
The next proposition states an explicit expression for $\vec Z:=\{\vec Z(t)\}_{t\geq 0}$:
\begin{proposition}
\label{prop:OU-mild-sol}
Assume that $\mathcal C_p$ defined in \eqref{carma-matrix-op-C} is the generator of a $C_0$-semigroup
$\{\mathcal S_p(t)\}_{t\geq 0}$ on $H$. Then the $H$-valued stochastic process 
$\vec Z$ given by
$$
\vec Z(t)=\mathcal S_p(t)\vec Z_0+\int_0^t\mathcal S_p(t-s)\mathcal P_p^*\D L(s)
$$
is the unique mild solution of \eqref{def-OU-H}. 
\end{proposition}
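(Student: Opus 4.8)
The plan is to verify that the proposed formula defines a well-defined $H$-valued process and then show it is the unique mild solution of \eqref{def-OU-H}. This is the Hilbert-space analogue of the classical Ornstein--Uhlenbeck representation, so the argument follows the standard stochastic-convolution machinery (as in \cite{PZ} or \cite{applebaum}), adapted to our product space $H$ and the specific noise coefficient $\mathcal P_p^*$.

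\textbf{Step 1: Well-definedness of the stochastic convolution.} First I would check that $\int_0^t\mathcal S_p(t-s)\mathcal P_p^*\,\D L(s)$ makes sense. Since $\{\mathcal S_p(t)\}_{t\geq 0}$ is a $C_0$-semigroup, there are constants $M\geq 1$, $\omega\in\mathbb R$ with $\|\mathcal S_p(t)\|_{\mathrm{op}}\leq M\e^{\omega t}$; hence $s\mapsto\mathcal S_p(t-s)\mathcal P_p^*$ is a uniformly bounded, strongly measurable $L(H_p,H)$-valued map on $[0,t]$. Because $L$ is a zero-mean square-integrable $H_p$-valued L\'evy process with trace-class covariance operator $Q$, the It\^o isometry for Hilbert-space stochastic integrals gives
\begin{equation*}
\mathbb E\left[\left|\int_0^t\mathcal S_p(t-s)\mathcal P_p^*\,\D L(s)\right|^2\right]=\int_0^t\|\mathcal S_p(t-s)\mathcal P_p^*Q^{1/2}\|_{\mathrm{HS}}^2\,\D s\leq M^2\e^{2|\omega|t}\,t\,\mathrm{tr}(Q)<\infty,
\end{equation*}
using that $\mathcal P_p^*$ is bounded and composition with a bounded operator preserves the Hilbert--Schmidt class. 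So the integral is a well-defined square-integrable $H$-valued random variable, adapted to $\mathcal F_t$, and $\vec Z(t)$ is $\mathcal F_t$-measurable and square integrable for each $t$.

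\textbf{Step 2: The mild-solution identity.} Next I would recall the definition of a mild solution: $\vec Z$ must satisfy $\vec Z(t)=\mathcal S_p(t)\vec Z_0+\int_0^t\mathcal S_p(t-s)\mathcal P_p^*\,\D L(s)$, which here is literally the asserted formula, so there is nothing to prove beyond Step 1 for existence once one confirms this is the intended notion (this is the standard approach to SPDEs with unbounded drift generator, where one does not require $\vec Z(t)\in\mathrm{Dom}(\mathcal C_p)$). If instead one wants to connect to the weak/analytically-weak formulation, I would test against $\phi\in\mathrm{Dom}(\mathcal C_p^*)$, use the semigroup identity $\frac{d}{dt}\langle\mathcal S_p(t)x,\phi\rangle=\langle\mathcal S_p(t)x,\mathcal C_p^*\phi\rangle$, apply a stochastic Fubini theorem to interchange $\int_0^t$ and the time-integration coming from the semigroup, and thereby recover $\langle\vec Z(t),\phi\rangle=\langle\vec Z_0,\phi\rangle+\int_0^t\langle\vec Z(s),\mathcal C_p^*\phi\rangle\,\D s+\langle\mathcal P_p^*L(t),\phi\rangle$; the stochastic Fubini step is the one requiring care, and it is justified by the square-integrability bound from Step 1.

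\textbf{Step 3: Uniqueness.} For uniqueness, suppose $\vec Z^{(1)}$ and $\vec Z^{(2)}$ are two mild solutions; their difference $\vec Y:=\vec Z^{(1)}-\vec Z^{(2)}$ satisfies $\vec Y(t)=0$ directly from the mild formulation since the noise and initial terms cancel, so uniqueness is immediate in the mild sense. If one phrases uniqueness in the weak sense, I would instead use the standard argument that any weak solution coincides with the stochastic-convolution formula: fix $t$, apply the weak equation with the time-dependent test function $s\mapsto\mathcal S_p(t-s)^*\phi$, and integrate; the boundary terms and the generator terms telescope, leaving exactly the mild representation. I expect the only genuine obstacle to be the justification of the stochastic Fubini theorem / the differentiation-under-the-integral in Step 2, all of which is controlled by the boundedness of $\mathcal S_p$ on $[0,t]$, the boundedness of $\mathcal P_p^*$, and $\mathrm{tr}(Q)<\infty$; everything else is bookkeeping on the product space $H$.
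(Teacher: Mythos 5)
Your proof is correct and follows essentially the same route as the paper: both verify that the stochastic convolution is well defined by bounding $\|\mathcal S_p(t-s)\mathcal P_p^*Q^{1/2}\|_{\mathrm{HS}}$ via the exponential growth of the $C_0$-semigroup and the trace-class property of $Q$, and then observe that the stated formula is the mild solution by definition (the paper simply cites \cite[Def.~9.5]{PZ} at this point). Your additional remarks on the weak formulation and uniqueness are supplementary detail the paper leaves implicit, but they do not change the argument.
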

\begin{proof}
We have that $\mathcal S_p(t-s)\mathcal P_p^*\in L(H_p,H)$, and moreover, since $\|\mathcal P_p^*\|_{\text{op}}=1$ it follows
$$
\|\mathcal S_p(t-s)\mathcal P_p^*Q^{1/2}\|_{\text{HS}}\leq\|\mathcal S_p(t-s)\|_{\text{op}}
\|\mathcal P_{p}^*\|_{\text{op}}\|Q^{1/2}\|_{\text{HS}}\leq K \e^{c(t-s)}\|Q^{1/2}\|_{\text{HS}}
$$
by the general exponential growth bound on the operator norm of a $C_0$-semigroup (see e.g. \cite[Prop.~I.5.5]{EN}). Thus, for all $t\geq 0$,
$$
\int_0^t\|\mathcal S_p(t-s)\mathcal P_p^*Q^{1/2}\|^2_{\text{HS}}\D s\leq \frac{K}{2c}
\e^{2ct}\|Q^{1/2}\|^2_{\text{HS}}<\infty
$$
because $Q$ is trace class by assumption. The stochastic integral with respect to $L$ in the definition of $\vec Z$ is 
therefore well-defined. 
Hence, the result follows directly from the definition 
of mild solutions in \cite[Def.~9.5]{PZ}.
\end{proof}

From now on we restrict our attention to operators $\mathcal C_p$ in \eqref{carma-matrix-op-C} which admit a 
$C_0$-semigroup $\{\mathcal S_p(t)\}_{t\geq 0}$. We remark in passing that in the next section we will provide a recursive definition 
of the semigroup $\{\mathcal S_p(t)\}_{t\geq 0}$ in a special situation where all involved operators are bounded except
$A_1$. 

A CARMA process with values in a Hilbert space is defined next:
\begin{definition}
\label{def:carma}
Let $U$ be a separable Hilbert space. For $\mathcal L_U\in L(H,U)$, define the $U$-valued 
stochastic process $X:=\{X(t)\}_{t\geq 0}$ by
$$
X(t):=\mathcal L_U\vec Z(t), t\geq 0,
$$
for $\vec Z(t)$ defined in \eqref{def-OU-H}. We call $\{X(t)\}_{t\geq 0}$ a CARMA($p,U,\mathcal L_U$)-process.
\end{definition}
Note that we do not have any $q$-parameter present in the definition, as in the real-valued case 
(recall \eqref{eq:carma-real}). Instead we introduce a Hilbert space and a linear operator as the "second" parameters in the
CARMA($p,U,\mathcal L_U$)-process. Indeed, the vector $\vec b$ in the real-valued CARMA($p,q$)-process
defined in \eqref{eq:carma-real} can be viewed as a linear operator from $\mathbb R^p$ into $\mathbb R$
by the scalar product operation $\mathbb R^p\ni\vec z\mapsto \vec b'\vec z\in\mathbb R$, or, by
choosing $U=H_i=\mathbb R$, $\mathcal L_{\mathbb R}\vec z=\vec b'\vec z$. This also demonstrates that
any real-valued CARMA($p,q$)-process is a CARMA($p,\mathbb R,\vec b'\cdot$)-process according to Definition~\ref{def:carma}.

From Proposition~\ref{prop:OU-mild-sol} we find that the explicit representation of $\{X(t)\}_{t\geq 0}$
is
\begin{equation}
\label{carma-explicit}
X(t)=\mathcal L_U\mathcal S_p(t)\vec Z_0+\int_0^t\mathcal L_U\mathcal S_p(t-s)\mathcal P_p^*\D L(s),
\end{equation}
for $t\geq 0$. Note that by linearity of the stochastic integral we can move the operator 
$\mathcal L_U$ inside. Furthermore, the stochastic integral is well-defined since $\mathcal L_U\in L(H,U)$
and thus has a finite operator norm.

The continuous-time autoregressive (CAR) processes constitute a particularly interesting subclass
of the CARMA($p,U,\mathcal L_U$)-processes:
\begin{definition}
\label{def:car}
The CARMA($p,H_1,\mathcal P_1$)-process $\{X(t)\}_{t\geq 0}$ from Definition~\ref{def:carma} is called
an $H_1$-valued CAR($p$)-process.
\end{definition}
The explicit dynamics of an $H_1$-valued CAR($p$)-process becomes
\begin{equation}
\label{car-explicit}
X(t)=\mathcal P_1\mathcal S_p(t)\vec Z_0+\int_0^t\mathcal P_1\mathcal S_p(t-s)\mathcal P_p^*\D L(s),
\end{equation}
for $t\geq 0$. In this paper we will be particularly focused on $H_1$-valued CAR($p$)-processes.

Remark that the process $\vec L:=\mathcal P_p^*L$ defines an $H$-valued L\'evy process which has mean zero and is
square integrable. Its covariance operator is easily seen to be $\mathcal P_p^*Q\mathcal P_p$. 

It is immediate to see that an $H_1$-valued CAR($1$) process is an Ornstein-Uhlenbeck process defined on $H_1$, with
$$
\D X(t)=A_1 X(t)\D t+\D L(t),
$$
and thus
$$
X(t)=\mathcal S_1(t)Z_0+\int_0^t\mathcal S_1(t-s)\D L(s),
$$
being its mild solution. 

An $H_1$-valued CAR($p$) process for $p>1$ can be viewed as a higher-order (indeed, a $p$th order) stochastic differential equation, as we now discuss. To this end, suppose that $Ran (A_q)\subset Dom (I_2)$ and $Ran(I_{q})\subset Dom(I_{q+1})$, and  
assume that there exist $p-1$ linear (unbounded) operators $B_1, B_2,\ldots, B_{p-1}$ such that
\begin{equation}
\label{eq:commute}
I_p\cdots I_2 A_q=B_q I_p I_{p-1}\cdots I_{q+1}.
\end{equation}
for $q=1,\ldots,p-1$. We note that $I_p\cdots I_2: H_p\rightarrow H_1$ and hence
$I_p\cdots I_2A_q:H_{p+1-q}\rightarrow H_1$. Moreover, 
$I_p\cdots I_{q+1}:H_{p+1-q}\rightarrow H_1$, and therefore $B_q:H_1\rightarrow H_1$. 
We suppose that $Dom(B_q)$ is so that $Dom(B_qI_pI_{p-1}\cdots I_{q+1})=Dom(A_q)$,
and we note that $Dom(A_q)$ is the domain of the operator $I_p\cdots I_2 A_q$.
For completeness,
we define the operator $B_p:H_1\rightarrow H_1$ as
\begin{equation}
\label{eq:Bp}
B_p:=I_p\cdots I_2 A_p.
\end{equation}
We see that this definition is consistent with the inductive relations in \eqref{eq:commute}
(we suppose that $B_p$ is a linear (possibly unbounded) operator with domain $Dom(B_p)=Dom(A_p)$).  
With this at hand,
we introduce the operator-valued $p$th-order polynomial $Q_p(\lambda)$ for $\lambda\in\mathbb C$,
\begin{equation}
\label{def:Q-poly}
Q_p(\lambda)=\lambda^p-B_1\lambda^{p-1}-B_2\lambda^{p-2}-\cdots-B_{p-1}\lambda-B_p.
\end{equation}
By definition, $X(t)=\mathcal P_1\mathbf Z(t)$, which is the first coordinate in the vector 
$\vec Z(t)=(Z_1(t),\ldots,Z_p(t))^{\top}\in H$. From \eqref{def-OU-H} and  the definition of the operator matrix $\mathcal C_p$ in \eqref{carma-matrix-op-C}, we find that
$Z_1'(t)=I_pZ_2(t), Z_2'(t)=I_{p-1}Z_3(t),\ldots, Z_{p-1}'(t)=I_2Z_p(t)$ and finally
$$
Z_p'(t)=A_pZ_1(t)+\cdots A_1 Z_1(t)+\dot{L}(t).
$$
Here, $\dot{L}(t)$ is the formal time derivative of $L$. By iteration, we find that 
$Z_1^{(q)}(t)=I_pI_{p-1}\cdots I_{p-(q-1)}Z_{q+1}(t)$ for $q=1,\ldots,p-1$. Thus,
\begin{align*}
Z_1^{(p)}&=\frac{d}{dt}Z_1^{(p-1)}=I_p\cdots I_2 Z_p'(t) \\
&=I_p\cdots I_2A_p Z_1(t)+
I_p\cdots I_2A_{p-1}Z_2(t)+\cdots+I_p\cdots I_2 A_1Z_p(t)+I_p\cdots I_2\dot{L}(t) \\
&=B_p Z_1(t)+B_{p-1}Z_1'(t)+B_{p-2}Z_1^{(2)}(t)+\ldots+B_1Z_1^{(p-1)}(t)+
I_p\cdots I_2\dot{L}(t).
\end{align*}
In the last equality we made use of \eqref{eq:commute} and \eqref{eq:Bp}. We find that an $H_1$-valued
CAR($p$) process $X(t)=\mathcal P_1\vec Z(t)$ can be viewed as a solution to the 
$p$th-order stochastic differential equation formally expressed by
\begin{equation}
\label{eq:car-pth-order-sde}
Q_p\left(\frac{d}{dt}\right)X(t)=I_p\cdots I_2\dot{L}(t).
\end{equation}
Re-expressing Eq.~\ref{eq:car-pth-order-sde} we find the stochastic differential equation
\begin{equation}
\D X^{(p-1)}(t)=\left(\sum_{q=1}^p B_qX^{(p-q)}(t)\right)\D t+I_p\cdots I_2\D L(t).
\end{equation}   
If $H_1=...=H_p$ and $\mathcal C_p$ is a bounded operator, then 
$B_q=I_q\cdots I_2A_q$ in \eqref{eq:commute} whenever $I_q\cdots I_2A_q$ commutes
with $I_p\cdots I_{q+1}$. In this sense the condition \eqref{eq:commute} is a specific commutation
relationship on $A_q$ and the operators $I_2,\ldots,I_p$. In the particular case $I_i=\text{Id}$ for $i=2,\ldots,p$, then we trivially have $A_q=B_q$ for $q=1,\ldots,p$. 

We end this section with showing that the stochastic wave equation can be viewed as an example of a Hilbert-valued CAR(2)-process. To this end, let $H_2:=L^2(0,1)$, the space of square-integrable functions on the unit interval, and consider the stochastic partial differential
equation
\begin{equation}
\label{eq:wave}
\frac{\partial^2 Y(t,x)}{\partial t^2}=\frac{\partial^2 Y(t,x)}{\partial x^2}+\frac{\partial L(t,x)}{\partial t},
\end{equation}
with $t\geq 0$ and $x\in(0,1)$. We can rephrase this wave equation as
\begin{equation}
\D\left[\begin{array}{c} Y(t,x) \\ \frac{\partial Y(t,x)}{\partial t}\end{array}\right]=\left[\begin{array}{cc} 0 & \text{Id} \\ \Delta & 0\end{array}\right] \left[\begin{array}{c} Y(t,x) \\ \frac{\partial Y(t,x)}{\partial t}\end{array}\right]\D t+\left[\begin{array}{c} 0\\ \D L(t,x)\end{array}\right],
\end{equation}
with $\Delta=\partial^2/\partial x^2$ being the Laplace operator. The eigenvectors $e_n(x):=\sqrt{2}\sin(\pi n x)$, $n\in\mathbb N$, for $\Delta$ form an orthonormal basis of $L^2(0,1)$. Introduce the Hilbert space $H_1$ as the subspace of $L^2(0,1)$ for which
$|f|_{1}^2:=\pi^2\sum_{n=1}^{\infty}n^2\langle f,e_n\rangle_2^2<\infty$. Following Example B.13 in \cite{PZ}, 
$$
\mathcal C_2=\left[\begin{array}{cc} 0 & \text{Id} \\ \Delta & 0\end{array}\right] 
$$
generates a $C_0$-semigroup $\mathcal S_2(t)$ on $H:=H_1\times H_2$.
The Laplace operator $\Delta$ is a self-adjoint negative definite operator on $H_1$. The semigroup $\mathcal S_2(t)$ can be represented
as
\begin{equation}
\label{eq:wave-semigroup}
\mathcal S_2(t)=\left[\begin{array}{cc} \cos((-\Delta)^{1/2}t) & (-\Delta)^{-1/2}\sin((-\Delta)^{1/2}t) \\
-(-\Delta)^{1/2}\sin((-\Delta)^{1/2}t) & \cos((-\Delta)^{1/2}t)\end{array}\right].
\end{equation}
In the previous equality, we define for a real-valued function $g$ the linear operator $g(\Delta)$ using functional calculus, i.e., $g(\Delta)f=\sum_{n=1}^{\infty}g(-\pi^2 n^2)\langle f,e_n\rangle_2e_n$ whenever this sum converges. These considerations show that the wave equation is a specific example of a CAR(2)-process.

\section{Analysis of CARMA processes}
\label{sec:sv}
In this section we derive some fundamental properties of CARMA processes in Hilbert spaces. 

\subsection{Distributional properties}
We state the conditional characteristic functional of a CARMA($p,U,\mathcal L_U$)-process in the next proposition.
\begin{proposition}
\label{prop:char-exp-carma}
Assume $X$ is a CARMA($p,U,\mathcal L_U$)-process. Then, for $x\in U$, 
\begin{align*}
\mathbb E\left[\e^{\mathrm{i}\langle X(t),x\rangle_U}\,\vert\,\mathcal F_s\right]&=\exp\left(\mathrm{i}\langle\mathcal L_U\mathcal S_p(t)\vec Z_0,x\rangle_U+\int_0^{t-s}\psi_L\left(\mathcal P_p\mathcal S^*_p(u)\mathcal L_U^*x\right)\D u\right) \\
&\qquad\times\exp\left(\mathrm{i}\langle\int_0^s\mathcal L_U\mathcal S_p(t-u)\mathcal P_p^*\D L(u),x\rangle_U\right),
\end{align*}
for $0\leq s\leq t$. Here, $\psi_L$ is the characteristic exponent of the L\'evy process $L$. 
\end{proposition}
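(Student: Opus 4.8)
The plan is to start from the explicit mild-solution representation \eqref{carma-explicit} of $X(t)$, split the stochastic integral at the conditioning time $s$, and then combine the independent-increments property of $L$ with the L\'evy--Khintchine description of stochastic integrals of deterministic integrands. Concretely, one writes
$$
X(t)=\mathcal L_U\mathcal S_p(t)\vec Z_0+\int_0^s\mathcal L_U\mathcal S_p(t-u)\mathcal P_p^*\D L(u)+\int_s^t\mathcal L_U\mathcal S_p(t-u)\mathcal P_p^*\D L(u).
$$
The first summand is deterministic and the second is $\mathcal F_s$-measurable, so the corresponding factors of $\exp(\mathrm{i}\langle\,\cdot\,,x\rangle_U)$ come out of $\mathbb E[\,\cdot\,\vert\,\mathcal F_s]$; the third summand is a functional of the increments $\{L(r)-L(s):r\in[s,t]\}$, which are independent of $\mathcal F_s$ under the usual hypotheses, so the conditional expectation of its exponential equals the unconditional one. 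This yields
$$
\mathbb E\!\left[\e^{\mathrm{i}\langle X(t),x\rangle_U}\,\vert\,\mathcal F_s\right]=\e^{\mathrm{i}\langle\mathcal L_U\mathcal S_p(t)\vec Z_0,x\rangle_U}\e^{\mathrm{i}\langle\int_0^s\mathcal L_U\mathcal S_p(t-u)\mathcal P_p^*\D L(u),x\rangle_U}\,\mathbb E\!\left[\e^{\mathrm{i}\langle\int_s^t\mathcal L_U\mathcal S_p(t-u)\mathcal P_p^*\D L(u),x\rangle_U}\right].
$$

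It remains to identify the last unconditional expectation. Since $\langle\,\cdot\,,x\rangle_U$ is bounded and linear it may be taken inside the stochastic integral, and using $\langle\mathcal L_U\mathcal S_p(r)\mathcal P_p^*y,x\rangle_U=\langle y,\mathcal P_p\mathcal S_p^*(r)\mathcal L_U^*x\rangle_p$ for $y\in H_p$ one obtains
$$
\left\langle\int_s^t\mathcal L_U\mathcal S_p(t-u)\mathcal P_p^*\D L(u),x\right\rangle_U=\int_s^t\langle\mathcal P_p\mathcal S_p^*(t-u)\mathcal L_U^*x,\D L(u)\rangle_p.
$$
The integrand $f(u):=\mathcal P_p\mathcal S_p^*(t-u)\mathcal L_U^*x$ is deterministic, and by the exponential growth bound on $\|\mathcal S_p^*(\cdot)\|_{\mathrm{op}}$ (identical to the bound for $\mathcal S_p$ used in the proof of Proposition~\ref{prop:OU-mild-sol}) together with $Q$ being trace class, it is square-integrable with respect to $L$ on $[s,t]$. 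For such deterministic integrands the characteristic functional of the L\'evy stochastic integral is the exponential of the integrated characteristic exponent, $\mathbb E[\exp(\mathrm{i}\int_s^t\langle f(u),\D L(u)\rangle_p)]=\exp(\int_s^t\psi_L(f(u))\,\D u)$ (cf.\ \cite[Sect.~4.9]{PZ} and \cite{applebaum}); inserting this and substituting $v=t-u$ replaces $\int_s^t$ by $\int_0^{t-s}$, which, combined with the previous display, is precisely the claimed identity.

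The step I expect to be the main obstacle is the characteristic-functional identity invoked above. To make the argument self-contained one proves it first for piecewise-constant integrands $f$, where it reduces to the independence of the increments of $L$ and the defining relation $\mathbb E[\e^{\mathrm{i}\langle L(h),y\rangle_p}]=\e^{h\psi_L(y)}$, and then passes to the limit using the $L^2$-isometry of the stochastic integral and the continuity (and at most quadratic growth, since $L$ is square integrable with zero mean) of $\psi_L$ via dominated convergence. The remaining ingredients---additivity of the stochastic integral, commutation of bounded linear operators with stochastic integration, and the measurability facts needed to split at $s$ and to use independence of the $(s,t]$-increments from $\mathcal F_s$---are routine and follow from the standing assumptions and the usual hypotheses on the filtration.
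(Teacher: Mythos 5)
Your proposal is correct and follows essentially the same route as the paper's proof: split the mild-solution representation at time $s$, pull out the deterministic and $\mathcal F_s$-measurable terms, use independence of increments to de-condition the remaining factor, and identify it via the characteristic functional of a stochastic integral with deterministic integrand (which the paper simply cites from Peszat--Zabczyk, Chapter~4, whereas you additionally sketch its proof by approximation with piecewise-constant integrands). No issues.
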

\begin{proof}
From \eqref{carma-explicit} it holds for $0\leq s\leq t$,
\begin{align*}
X(t)&=\mathcal L_U\mathcal S_p(t)\vec Z_0+\int_0^s\mathcal L_U\mathcal S_p(t-u)\mathcal P_p^*\D L(u)+\int_s^t\mathcal L_U\mathcal S_p(t-u)\mathcal P_p^*\D L(u).
\end{align*}
The L\'evy process has independent increments, and $\mathcal F_s$-measurability of the first stochastic integral thus yields
\begin{align*}
\mathbb E\left[\e^{\mathrm{i}\langle X(t),x\rangle_U}\,\vert\,\mathcal F_s\right]&=\exp\left(\mathrm{i}\langle\mathcal L_U\mathcal S_p(t)\vec Z_0,x\rangle_U+\mathrm{i}\langle\int_0^s\mathcal L_U\mathcal S_p(t-u)\mathcal P_p^*\D L(u),x\rangle_U\right) \\
&\qquad\times\mathbb E\left[\exp\left(\mathrm{i}\langle\int_s^t\mathcal L_U\mathcal S_p(t-u)\mathcal P_p^*\D L(u),x\rangle_U\right)\right].
\end{align*}
The result follows from \cite[Chapter 4]{PZ}.
\end{proof}
Suppose now that $L=W$, an $H_p$-valued Wiener process. Then the characteristic exponent is
$$
\psi_W(h)=-\frac12\langle Qh,h\rangle_p,
$$
for $h\in H_p$. Hence, from Prop.~\ref{prop:char-exp-carma} it follows that,
\begin{align*}
\mathbb E\left[\e^{\mathrm{i}\langle X(t),x\rangle_U}\,\vert\,\mathcal F_s\right]&=\exp\left(\mathrm{i}\langle\mathcal L_U\mathcal S_p(t)\vec Z_0,x\rangle_U+\mathrm{i}\langle\int_0^s\mathcal L_U\mathcal S_p(t-u)\mathcal P_p^*\D W(u),x\rangle_U\right) \\
&\qquad\times\exp\left(-\frac12\int_0^{t-s}\langle\mathcal L_U\mathcal S_p(u)\mathcal P_p^*Q\mathcal P_p\mathcal S^*_p(u)\mathcal L_U^*x,x\rangle_U\D u\right)
\end{align*}
We find that $X(t)|\mathcal F_s$ for $s\leq t$ is a Gaussian process in $H_1$, with mean
$$
\mathbb E\left[X(t)\,|\,\mathcal F_s\right]=\mathcal L_U\mathcal S_p(t)\vec Z_0+\int_0^s\mathcal W_U\mathcal S_p(t-u)\mathcal P_p^*\D L(u)
$$ 
and covariance operator
$$
\text{Var}(X(t)|\mathcal F_s)=\int_0^{t-s}\mathcal L_U\mathcal S_p(u)\mathcal P_p^*Q\mathcal P_p\mathcal S^*_p(u)\mathcal L_U^*\D u,
$$
where the integral is interpreted in the Bochner sense. If the semigroup $\mathcal S_p(u)$
is exponentially stable, then $X(t)|\mathcal F_s$ admits a Gaussian limiting distribution with mean zero
and covariance operator
$$
\lim_{t\rightarrow\infty}\text{Var}(X(t)|\mathcal F_s)=\int_0^{\infty}\mathcal L_U\mathcal S_p(u)\mathcal P_p^*Q\mathcal P_p\mathcal S^*_p(u)\mathcal L_U^*\D u.
$$
This is the invariant measure of $X$. We remark in passing that measures on $H$ are defined on its 
Borel $\sigma$-algebra. 

In \cite{applebaum} there is an analysis of invariant measures of L\'evy-driven Ornstein-Uhlenbeck processes. We discuss this here in
the context of the Ornstein-Uhlenbeck process $\{\vec Z(t)\}_{t\geq 0}$ defined in 
\eqref{def-OU-H}. Assume $\mu_{\vec Z}$ is the invariant measure of $\{\vec Z(t)\}_{t\geq 0}$,
and recall the definition of its characteristic exponent $\widehat{\mu}_{\vec Z}(\vec x)$,
\begin{equation}
\widehat{\mu}_{\vec Z}(\vec x)=\log\mathbb E\left[\E^{\mathrm{i}\langle\vec x,\vec Z(t)\rangle}\right].
\end{equation}
Here, $\vec x\in H$ and $\log$ is the distinguished logarithm (see e.g. \cite[Lemma~7.6]{sato}). 
If $\vec Z_0\sim\mu_{\vec Z}$, then, in distribution, $\vec Z_0=\vec Z(t)$ for all $t\geq0$ and it follows that 
the characteristic exponent of $\mu_{\vec Z}$ satisfies,
\begin{equation}
\widehat{\mu}_{\vec Z}(\vec x)=\widehat{\mu}_{\vec Z}(\mathcal S_p^*(t)\vec x)+
\int_0^t\psi_L(\mathcal P_p\mathcal S^*_p(u)\vec x)\D u
\end{equation}
for any $\vec x\in H$ and $t\geq 0$. Following \cite{applebaum}, $\mu_{\vec Z}$ becomes an operator self-decomposable distribution since, 
\begin{equation}
\mu_{\vec Z}=\mathcal S_p(t)\mu_{\vec Z}\star\mu_t.
\end{equation}
Here, $\mu_t$ is the distribution of $\int_0^t\mathcal S_p(u)\mathcal P^*_p\D L(u)$,
$\star$ is the convolution product of measures and $\mathcal S_p(t)\mu_{\vec Z}:=\mu_{\vec Z}\circ\mathcal S_p(t)^{-1}$ is a probability distribution on $H$, 
given by
$$
\int_Hf(\vec x)(\mathcal S_p(t)\mu_{\vec Z})(\D\vec x)=\int_Hf(\mathcal S_p(t)^*\vec x)
\mu_{\vec Z}(\D\vec x),
$$ 
for any bounded measurable function $f:H\rightarrow\mathbb R$. 
If $\vec Z(t)\sim\mu_{\vec Z}$, then since
$$
\log\mathbb E\left[\E^{\mathrm{i}\langle\mathcal L_U\vec Z(t),x\rangle_U}\right]
=\log\mathbb E\left[\E^{\langle\vec Z(t),\mathcal L^*_Ux\rangle}\right]=
\widehat{\mu}_{\vec Z}(\mathcal L_U^*x),
$$
it follows that $\{X(t)\}_{t\geq 0}$ is stationary with distribution $\mu_X$ having characteristic
exponent $\widehat{\mu}_X(x)=\widehat{\mu}_{\vec Z}(\mathcal L_U^*x)$ for $x\in U$.  

We notice that $\mathcal C_p$ is a bounded operator on $H$ if and only if $A_i, i=1,\ldots,p$ and
$I_j, j=2,\ldots,p$ are bounded operators. In the case of $\mathcal C_p$ being bounded, we know from Thm.~I.3.14 in \cite{EN} that the semigroup $\{\mathcal S_p(t)\}_{t\geq 0}$
is exponentially stable if and only if $\text{Re}(\lambda)<0$ for all $\lambda\in\sigma(\mathcal C_p)$,
where $\sigma(\mathcal C_p)$ denotes the spectrum of the bounded operator $\mathcal C_p$. 
Recall from Section~\ref{sec:intro} that a real-valued CARMA process admits a limiting distribution if and only if all the eigenvalues of 
$C_p$ in Eq.~\ref{eq:real-car-matrix} have negative real part. 
In general, by Thm.~V.1.11 in \cite{EN}, the semigroup $\{\mathcal S_p(t)\}_{t\geq 0}$ is exponentially stable if and only if $\{\lambda\in\mathbb C\,|\,\text{Re}(\lambda)>0\}$ is a subset of the resolvent set $\rho(\mathcal C_p)$ of $\mathcal C_p$ and $\sup_{\text{Re}(\lambda)>0}\|R(\lambda,\mathcal C_p)\|<\infty$. Here, $R(\lambda,\mathcal C_p)$ is the resolvent of $\mathcal C_p$ for $\lambda\in\rho(\mathcal C_p)$.


\subsection{Semimartingale representation}

Let us study a semimartingale representation of the CAR($p$) process. We have the following proposition:
\begin{proposition}
For $p\in\mathbb N$ with $p>1$, assume that $\mathcal C_p$ defined in \eqref{carma-matrix-op-C} is the 
generator of a $C_0$-semigroup $\{\mathcal S_p(t)\}_{t\geq 0}$. Then the $H_1$-valued CAR($p$) process $X$ given in
Definition~\ref{def:car} has the representation
$$
X(t)=\mathcal P_1\mathcal S_p(t)\vec Z_0+\mathcal P_1\mathcal C_p\int_0^t\int_0^{u}\mathcal S_p(u-s)\mathcal P_p^*\D L(s)\D u,
$$
for all $t\geq 0$. 
\end{proposition}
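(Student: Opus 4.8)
The plan is to start from the explicit mild-solution representation of the CAR($p$) process given in \eqref{car-explicit}, namely
$$
X(t)=\mathcal P_1\mathcal S_p(t)\vec Z_0+\int_0^t\mathcal P_1\mathcal S_p(t-s)\mathcal P_p^*\D L(s),
$$
and to show that the stochastic convolution term equals $\mathcal P_1\mathcal C_p\int_0^t\int_0^u\mathcal S_p(u-s)\mathcal P_p^*\D L(s)\D u$. The deterministic initial-value term is already in the desired form, so all the work is in rewriting the stochastic integral. The key analytic fact is the semigroup identity $\int_0^t\mathcal S_p(r)h\,\D r=\mathcal C_p^{-1}(\mathcal S_p(t)-\mathrm{Id})h$ for $h\in H$ when $0\in\rho(\mathcal C_p)$, or more robustly the identity valid without invertibility assumptions, $\mathcal C_p\int_0^t\mathcal S_p(r)h\,\D r=\mathcal S_p(t)h-h$ for $h\in H$, together with the fact that $\int_0^t\mathcal S_p(r)h\,\D r\in Dom(\mathcal C_p)$; this is standard $C_0$-semigroup theory (e.g.\ \cite[Lemma~II.1.3]{EN}).

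First I would fix $t\geq 0$ and work with the inner double integral $Y(t):=\int_0^t\left(\int_0^u\mathcal S_p(u-s)\mathcal P_p^*\D L(s)\right)\D u$, which is a Bochner integral in $u$ of the $H$-valued stochastic convolution process $u\mapsto\int_0^u\mathcal S_p(u-s)\mathcal P_p^*\D L(s)$; this process has a continuous (hence Bochner-integrable on $[0,t]$) modification by the standard regularity theory for stochastic convolutions in \cite{PZ}. Next I would apply a stochastic Fubini theorem (\cite[Thm.~8.14]{PZ}, applicable since the integrability bound from the proof of Proposition~\ref{prop:OU-mild-sol} controls the relevant norms) to interchange the $\D u$ and $\D L(s)$ integrations:
$$
Y(t)=\int_0^t\left(\int_s^t\mathcal S_p(u-s)\D u\right)\mathcal P_p^*\D L(s)=\int_0^t\left(\int_0^{t-s}\mathcal S_p(r)\D r\right)\mathcal P_p^*\D L(s).
$$
Then the inner Bochner integral $\int_0^{t-s}\mathcal S_p(r)\D r$ maps $H$ into $Dom(\mathcal C_p)$, so applying $\mathcal C_p$ (a closed operator, which may therefore be pulled through the stochastic integral by \cite[Prop.~4.30]{PZ} once one checks the integrand and its image under $\mathcal C_p$ are both integrable) gives
$$
\mathcal C_p Y(t)=\int_0^t\left(\mathcal S_p(t-s)-\mathrm{Id}\right)\mathcal P_p^*\D L(s)=\int_0^t\mathcal S_p(t-s)\mathcal P_p^*\D L(s)-\int_0^t\mathcal P_p^*\D L(s).
$$
Since $L$ has mean zero, the last term contributes; but after applying $\mathcal P_1$ one must check it vanishes or is absorbed — in fact $\mathcal P_1\mathcal P_p^*=0$ because $p>1$, so $\int_0^t\mathcal P_1\mathcal P_p^*\D L(s)=0$. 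Hence $\mathcal P_1\mathcal C_p Y(t)=\int_0^t\mathcal P_1\mathcal S_p(t-s)\mathcal P_p^*\D L(s)$, which is exactly the stochastic convolution term in \eqref{car-explicit}, completing the proof.

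The main obstacle I anticipate is the careful justification of the three interchanges: the stochastic Fubini step, and pulling the unbounded closed operator $\mathcal C_p$ inside both the Bochner integral in $r$ and the stochastic integral in $s$. For the Fubini and the $\mathcal C_p$-through-stochastic-integral steps one needs square-integrability in the Hilbert–Schmidt norm of the relevant integrands composed with $Q^{1/2}$, which is available from the exponential growth bound $\|\mathcal S_p(r)\|_{\mathrm{op}}\leq K\e^{cr}$ used in Proposition~\ref{prop:OU-mild-sol}; one must also verify that $r\mapsto\mathcal C_p\int_0^r\mathcal S_p(v)\,\D v=\mathcal S_p(r)-\mathrm{Id}$ gives an integrand whose composition with $Q^{1/2}$ is likewise square-integrable, which it is by the same bound. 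A cleaner alternative, if one is willing to assume $0\in\rho(\mathcal C_p)$ (automatic when the semigroup is exponentially stable, the case of primary interest), is to write $\int_0^{t-s}\mathcal S_p(r)\,\D r=\mathcal C_p^{-1}(\mathcal S_p(t-s)-\mathrm{Id})$ with $\mathcal C_p^{-1}$ bounded, which makes every interchange routine; I would present the argument this way and remark on the general case.
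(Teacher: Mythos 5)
Your proposal is correct and follows essentially the same route as the paper's proof: both rest on the identity $\mathcal S_p(t)h-h=\mathcal C_p\int_0^t\mathcal S_p(s)h\,\D s$ from Lemma~II.1.3 of \cite{EN}, the observation that the identity term is killed because $\mathcal P_1\mathcal P_p^*=0$ for $p>1$, and the stochastic Fubini theorem to pass between the iterated integrals. The only substantive difference is how the unbounded operator $\mathcal C_p$ is moved across the stochastic integral: you invoke a general closedness result together with square-integrability of the integrand and of its image under $\mathcal C_p$, whereas the paper verifies directly that $\int_0^t\int_s^t\mathcal S_p(u-s)\mathcal P_p^*\,\D u\,\D L(s)\in Dom(\mathcal C_p)$ by an explicit difference-quotient computation using the semigroup property; both justifications are valid.
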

\begin{proof}
From \cite[Ch. II, Lemma~1.3]{EN}, we have that 
$$
\mathcal S_p(t)=\text{Id}+\mathcal C_p\int_0^t\mathcal S_p(s)\D s.
$$
But for any $\vec x\in H$, it is simple to see that $\mathcal P_1\text{Id}\mathcal P_p^*\vec x=0$ when $p>1$. Therefore
it holds
$$
\mathcal P_1\mathcal S_p(t)\mathcal P^*_p=\mathcal P_1\mathcal C_p\int_0^t\mathcal S_p(s)\mathcal P_p^*\D s.
$$
The integral on the right-hand side is in Bochner sense as an integral of operators. After appealing to the stochastic Fubini theorem,
it follows from the explicit expression of $X(t)$ in \eqref{car-explicit}
\begin{align*}
X(t)&=\mathcal P_1\mathcal S_p(t)\vec Z_0+\int_0^t\mathcal P_1\mathcal S_p(t-s)\mathcal P_p^*\D L(s) \\
&=\mathcal P_1\mathcal S_p(t)\vec Z_0+\int_0^t\mathcal P_1\mathcal C_p\int_0^{t-s}\mathcal S_p(u)\mathcal P_p^*\D u\D L(s) \\
&=\mathcal P_1\mathcal S_p(t)\vec Z_0+\mathcal P_1\int_0^t\mathcal C_p\int_s^{t}\mathcal S_p(u-s)\mathcal P_p^*\D u\D L(s).
\end{align*} 
We know from \cite[Ch. II, Lemma~1.3]{EN} that $\int_s^t\mathcal S_p(u-s)\mathcal P_p^*\D u\in Dom(\mathcal C_p)$. We 
demonstrate that $\int_0^t\int_s^t\mathcal S_p(u-s)\mathcal P_p^*\D u\D L(s)\in Dom(\mathcal C_p)$: First we recall that
$\vec L=\mathcal P_p^*L$ is an $H$-valued square-integrable L\'evy process
with mean zero. From the semigroup property,
\begin{align*}
\frac1h&\left(\mathcal S_p(h)\int_0^t\int_s^t\mathcal S_p(u-s)\D u\D\vec L(s)-\int_0^t\int_s^t\mathcal S_p(u-s)\D u\D\vec L(s)\right) \\
&\qquad=\frac1h\int_0^t\int_s^t\mathcal S_p(u+h-s)\D u\D\vec L(s)-\frac1h\int_0^t\int_s^t\mathcal S_p(u-s)\D u\D\vec L(s) \\
&\qquad=\int_0^t\frac1h\int_{s+h}^{t+h}\mathcal S_p(v-s)\D v-\frac1h\int_s^t\mathcal S_p(v-s)\D s\D\vec L(s) \\
&\qquad=\int_0^t\frac1h\int_{t}^{t+h}\mathcal S_p(v-s)\D v-\frac1h\int_s^{s+h}\mathcal S_p(v-s)\D s\D\vec L(s) \\
&\qquad=\int_0^t\frac1h\int_0^h\mathcal S_p(u)\D u\mathcal S_p(t-s)-\frac1h\int_0^h\mathcal S_p(u)\D u\D\vec L(s) \\
&\qquad=\frac1h\int_0^h\mathcal S_p(u)\D u\left(\int_0^t\mathcal S_p(t-s)\D\vec L(s)-\vec L(t)\right).
\end{align*}
By the fundamental theorem of calculus for Bochner integrals, $(1/h)\int_0^h\mathcal S_p(u)\D u\rightarrow\text{Id}$ when
$h\downarrow 0$. Therefore, the limit exists and the claim follows. From this we find that 
\begin{align*}
X(t)&=\mathcal P_1\mathcal S_p(t)\vec Z_0+\mathcal P_1\mathcal C_p\int_0^t\int_s^{t}\mathcal S_p(u-s)\mathcal P_p^*\D u\D L(s) \\
&=\mathcal P_1\mathcal S_p(t)\vec Z_0+\mathcal P_1\mathcal C_p\int_0^t\int_0^{u}\mathcal S_p(u-s)\mathcal P_p^*\D L(s)\D u. 
\end{align*} 
In the last equality, we applied the stochastic Fubini Theorem (see e.g. \cite[Thm.~8.14]{PZ}).
Hence,
the result follows. 
\end{proof}
Note that if $\vec Z_0\in Dom(\mathcal C_p)$, then by \cite[Ch. II, Lemma~1.3]{EN} 
$t\mapsto \mathcal P_1\mathcal S_p(t)\vec Z_0$ are differentiable. Assuming that $\int_0^t\mathcal S_p(t-s)\mathcal P_p^*\D L(s)\in Dom(\mathcal C_p)$, it follows from the Proposition above that the paths $t\mapsto X(t), t\geq 0$ of
$X$ is differentiable, with
\begin{equation}
\label{eq:diff_path}
X'(t)=\mathcal P_1\mathcal C_p\mathcal S_p(t)\vec Z_0+\mathcal P_1\mathcal C_p\int_0^t\mathcal S_p(t-s)\mathcal P_p^*\D L(s),
\end{equation} 
for $t\geq 0$. The stochastic integral in \eqref{eq:diff_path} has RCLL (cadlag) paths, and therefore the $H_1$-valued CAR($p$)-processes for $p>1$ have differentiable paths being RCLL. If $L=W$, an $H_p$-valued Wiener process, then the stochastic integral has continuous paths and the paths of $X$ become continuously differentiable.  
We point out that $p>1$ is very different from $p=1$ in this respect, as the Ornstein-Uhlenbeck process
\begin{align*}
X(t)&=\mathcal S_1(t)Z_0+\int_0^t\mathcal S_1(t-s)\D L(s) \\
&=\mathcal S_1(t)Z_0+L(t)+\int_0^t\int_0^u\mathcal S_1(u-s)\D L(s)\D u,
\end{align*}
does not have differentiable paths except in the trivial case when the L\'evy process is simply a drift.
It is straightforward to define an $H_p$-valued  L\'evy process $L$ for which  
$\int_0^t\mathcal S_p(t-s)\mathcal P_p^*\D L(s)\in Dom(\mathcal C_p)$. For example, let $\widetilde{L}$ be an $\mathbb R$-valued
square-integrable L\'evy process with zero mean, and define $L=\widetilde{L}g$ for $g\in Dom(A_1)\cap Dom(I_2)$. Then
$\mathcal P_p^*L=(\mathcal P_p^*g)\widetilde{L}\in Dom(\mathcal C_p)$, and therefore 
$\int_0^t\mathcal S_p(t-s)(\mathcal P_p^*g)\D\widetilde L(s)\in Dom(\mathcal C_p)$ from \cite[Ch. II, Lemma~1.3]{EN}.
If we consider the particular case of the wave equation, as presented at the end of Sect.~\ref{sec:intro}, we have $A_1=0$ and
$I_2=\text{Id}$, and thus we can choose any $g\in H_2$. In this case we can conclude that the paths of the solution of the wave 
equation are RCLL.

\subsection{Semigroup representation}

We study a recursive representation of the $C_0$-semigroup $\{\mathcal S_p(t)\}_{t\geq 0}$ with $\mathcal C_p$ as generator, where 
we recall $\mathcal C_p$ from \eqref{carma-matrix-op-C}. The following result is known as the variation-of-constants formula
(see e.g. \cite[Appendix B.1.1 and Thm. B.5]{PZ}) and turns out to be convenient when expressing the semigroup for
$\mathcal C_p$. 
\begin{proposition}
\label{prop:basic-semigroup-result}
Let $\mathcal A$ be a linear operator on $H$ being the generator of a $C_0$-semigroup $\{\mathcal S_{\mathcal A}(t)\}_{t\geq 0}$. 
Assume that $\mathcal B\in L(H)$.
Then the
operator $\mathcal A+\mathcal B: Dom(\mathcal A)\rightarrow H$ is the generator of the $C_0$-semigroup 
$\{\mathcal S(t)\}_{t\geq 0}$ defined by
$$
\mathcal S(t)=\mathcal S_{\mathcal A}(t)+\mathcal R(t),
$$
where
$$
\mathcal R(t)=\sum_{n=1}^{\infty}\mathcal R_n(t),
$$
and 
$$
\mathcal R_{n+1}(t)=\int_0^t\mathcal S_{\mathcal A}(t-s)\mathcal B\mathcal R_n(s)\D s,
$$
for $n=0,1,2,\ldots,$ with $\mathcal R_0(t)=\mathcal S_{\mathcal A}(t)$. 
\end{proposition}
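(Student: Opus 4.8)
The plan is to recognize this as the classical bounded-perturbation theorem and to prove it by a direct analysis of the series, along the route behind \cite[Thm.~B.5]{PZ}. First I would record the exponential growth bound $\|\mathcal S_{\mathcal A}(t)\|_{\text{op}}\leq M\e^{\omega t}$ valid for every $C_0$-semigroup, set $b:=\|\mathcal B\|_{\text{op}}$, and prove by induction on $n$ that
$$
\|\mathcal R_n(t)\|_{\text{op}}\leq M^{n+1}b^n\e^{\omega t}\,\frac{t^n}{n!},\qquad t\geq 0,
$$
the inductive step being an immediate estimate of the defining Bochner integral $\mathcal R_{n+1}(t)=\int_0^t\mathcal S_{\mathcal A}(t-s)\mathcal B\mathcal R_n(s)\,\D s$. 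Summing over $n$ shows that $\mathcal R(t)=\sum_{n\geq 1}\mathcal R_n(t)$ converges in $L(H)$, uniformly for $t$ in compact intervals, with $\|\mathcal R(t)\|_{\text{op}}\leq M\e^{\omega t}(\e^{Mbt}-1)$. Hence $\mathcal S(t):=\mathcal S_{\mathcal A}(t)+\mathcal R(t)$ is a well-defined element of $L(H)$; it is strongly continuous in $t$ (each $\mathcal R_n(\cdot)x$ is continuous, being a Bochner integral of a continuous integrand, and the convergence of the series is uniform on compacts), and $\mathcal S(0)=\text{Id}$ since $\mathcal R_n(0)=0$ for all $n\geq 1$.

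Next I would pass to the limit in the recursion to obtain the variation-of-constants equation
$$
\mathcal S(t)=\mathcal S_{\mathcal A}(t)+\int_0^t\mathcal S_{\mathcal A}(t-s)\,\mathcal B\,\mathcal S(s)\,\D s,\qquad t\geq 0,
$$
using that $\sum_{n\geq 0}\mathcal R_{n+1}(t)=\int_0^t\mathcal S_{\mathcal A}(t-s)\mathcal B\bigl(\sum_{n\geq 0}\mathcal R_n(s)\bigr)\D s$, where the interchange of summation and Bochner integration is justified by the uniform convergence just established together with $b<\infty$. The semigroup property $\mathcal S(t+r)=\mathcal S(t)\mathcal S(r)$ I would then deduce by a uniqueness argument: for fixed $r\geq 0$, splitting the integral in the variation-of-constants equation for $\mathcal S(t+r)$ at $s=r$ and using the semigroup law for $\mathcal S_{\mathcal A}$ shows that $t\mapsto\mathcal S(t+r)$ solves $\mathcal U(t)=\mathcal S_{\mathcal A}(t)\mathcal S(r)+\int_0^t\mathcal S_{\mathcal A}(t-s)\mathcal B\,\mathcal U(s)\,\D s$; so does $t\mapsto\mathcal S(t)\mathcal S(r)$, and a Gronwall estimate on the $L(H)$-norm of their difference forces the two to coincide. (The same Gronwall argument shows this Volterra equation has at most one bounded strongly continuous solution.)

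It remains to identify the generator. Let $\mathcal G$ be the generator of $\{\mathcal S(t)\}_{t\geq 0}$ and write, for $x\in H$,
$$
\tfrac1t(\mathcal S(t)x-x)=\tfrac1t(\mathcal S_{\mathcal A}(t)x-x)+\tfrac1t\int_0^t\mathcal S_{\mathcal A}(t-s)\mathcal B\mathcal S(s)x\,\D s.
$$
The last term converges to $\mathcal B x$ as $t\downarrow 0$, since $\mathcal S_{\mathcal A}(t-s)\mathcal B\mathcal S(s)x\to\mathcal B x$ uniformly for $s\in[0,t]$ by strong continuity of both semigroups. Hence $x\in Dom(\mathcal G)$ if and only if $x\in Dom(\mathcal A)$, and then $\mathcal G x=\mathcal A x+\mathcal B x$; that is, $\mathcal G=\mathcal A+\mathcal B$ with $Dom(\mathcal G)=Dom(\mathcal A)$, which is the assertion.

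I expect the semigroup property to require the most care: deriving it straight from the series would force a delicate rearrangement of iterated Bochner integrals, whereas the Volterra/Gronwall route is clean but still relies on the (routine) uniqueness of bounded strongly continuous solutions. The remaining ingredients — the norm estimates, strong continuity, and the generator computation — are routine, and because $\mathcal B$ is bounded no domain or closedness subtleties intervene.
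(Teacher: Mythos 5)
Your proof is correct and complete. Note, however, that the paper itself does not prove this proposition at all: it is stated as a known result (the bounded-perturbation / variation-of-constants theorem) with a citation to \cite[Appendix B.1.1 and Thm.~B.5]{PZ}, so there is no internal proof to compare against. What you have written is the standard Dyson--Phillips-series argument found in those references and in \cite[Ch.~III]{EN}: the inductive bound $\|\mathcal R_n(t)\|_{\text{op}}\leq M^{n+1}b^n\e^{\omega t}t^n/n!$ is verified correctly, the passage to the Volterra equation and the Gronwall-based uniqueness argument for the semigroup law are sound, and the generator identification correctly yields the two-sided inclusion $Dom(\mathcal G)=Dom(\mathcal A)$ because the integral term in the difference quotient converges to $\mathcal Bx$ for \emph{every} $x\in H$. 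In short, you have supplied a self-contained proof of a result the authors chose to import; the only thing your write-up "does differently" is that it does the work the citation defers.
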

We apply the proposition above to give a recursive description of the $C_0$-semigroup of $\mathcal C_p$. 
\begin{proposition}
\label{prop:carma-generator}
Given the operator $\mathcal C_p$ defined in \eqref{carma-matrix-op-C} for $p\in\mathbb N$, where $\mathcal C_1=A_1$ is a densely defined
linear operator on $H_p$ (possibly unbounded) with $C_0$-semigroup $\{\mathcal S_1(t)\}_{t\geq 0}$. 
For $p>1$, assume that $I_p\in L(H_2,H_1)$, $A_p\in L(H_1,H_p)$ and $\mathcal C_{p-1}$ is a densely defined operator on $H_2\times...\times H_p$ with a $C_0$-semigroup $\{\mathcal S_{p-1}(t)\}_{t\geq 0}$, then 
$$
\mathcal S_p(t)=\mathcal S_{p-1}^+(t)+\sum_{n=1}^{\infty}\mathcal R_{n,p}(t),
$$
where $\mathcal R_{0,p}(t)=\mathcal S_{p-1}^+(t)$ and for $n=1,2,\ldots,$
$$
\mathcal R_{n+1,p}(t)=\int_0^t\mathcal S_{p-1}^+(t-s)\mathcal B_p\mathcal R_{n,p}(s)\D s.
$$
Here, $\mathcal B_p\in L(H)$ is
$$
\mathcal B_p=\left[\begin{array}{ccccc} 
0 & I_p & 0 & . . . & 0 \\ 
0 & 0    & 0 & . . . & 0 \\
.  & .    & .  & . . . & . \\
 .  & .    & .  & . . . & . \\
A_p & 0 & . & . . . & 0
\end{array}\right]
$$
and $\mathcal S^+_{p-1}\in L(H)$
$$
\mathcal S^+_{p-1}=\left[\begin{array}{cccc} 
\text{Id} & 0 & . . . & 0 \\ 
0 &   &   &  \\
.  &   &   &  \\
.  &   & \mathcal S_{p-1}(t) & \\
0  &   &   &  
\end{array}\right]
$$
for $\text{Id}$ being the identity operator on $H_1$. 
\end{proposition}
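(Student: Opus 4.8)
The plan is to realise $\mathcal{C}_p$ as a bounded perturbation of an operator whose semigroup is already known, and then quote Proposition~\ref{prop:basic-semigroup-result}. Group the coordinates of $H=H_1\times\cdots\times H_p$ as $H=H_1\times H'$ with $H':=H_2\times\cdots\times H_p$. Reading the operator matrix \eqref{carma-matrix-op-C} in this $2\times 2$ block form, I would observe that its lower-right $(p-1)\times(p-1)$ block is \emph{exactly} $\mathcal{C}_{p-1}$: under the index shift $H_i\mapsto H_{i-1}$ the operators $A_1,\dots,A_{p-1}$ and $I_2,\dots,I_{p-1}$ occurring there are precisely those that build the canonical operator matrix on $H_2,\dots,H_p$. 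The only entries of \eqref{carma-matrix-op-C} outside this block are $I_p$ in the $(1,2)$-slot and $A_p$ in the $(p,1)$-slot. Hence
$$
\mathcal{C}_p=\mathcal{A}+\mathcal{B}_p,\qquad \mathcal{A}:=\begin{bmatrix}0 & 0\\ 0 & \mathcal{C}_{p-1}\end{bmatrix},
$$
where $\mathcal{B}_p$ is the operator carrying $I_p$ and $A_p$ in those two slots and zeros elsewhere, i.e.\ the operator displayed in the statement; by the hypotheses $I_p\in L(H_2,H_1)$ and $A_p\in L(H_1,H_p)$ one has $\mathcal{B}_p\in L(H)$.

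Next I would check that $\mathcal{A}$ generates the $C_0$-semigroup $\mathcal{S}^+_{p-1}(t)=\mathrm{diag}\bigl(\mathrm{Id}_{H_1},\mathcal{S}_{p-1}(t)\bigr)$ on $H$. This is the elementary fact that the product of the trivial semigroup on $H_1$ (with generator $0$) and the semigroup $\mathcal{S}_{p-1}$ on $H'$ (with generator $\mathcal{C}_{p-1}$) is a $C_0$-semigroup on the product space whose generator is the block-diagonal operator with domain $H_1\times Dom(\mathcal{C}_{p-1})$; strong continuity, the semigroup identity and the differential characterisation of the generator are all verified coordinatewise and use only that $\{\mathcal{S}_{p-1}(t)\}_{t\geq 0}$ is a $C_0$-semigroup with generator $\mathcal{C}_{p-1}$.

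With $\mathcal{A}$ and $\mathcal{B}=\mathcal{B}_p$ in hand, Proposition~\ref{prop:basic-semigroup-result} applies directly and yields that $\mathcal{A}+\mathcal{B}_p$, acting on $Dom(\mathcal{A})=H_1\times Dom(\mathcal{C}_{p-1})$, generates the $C_0$-semigroup $\mathcal{S}^+_{p-1}(t)+\sum_{n=1}^{\infty}\mathcal{R}_{n,p}(t)$ with $\mathcal{R}_{0,p}(t)=\mathcal{S}^+_{p-1}(t)$ and $\mathcal{R}_{n+1,p}(t)=\int_0^t\mathcal{S}^+_{p-1}(t-s)\mathcal{B}_p\mathcal{R}_{n,p}(s)\D s$, which is the asserted expression, the convergence of the series being part of that proposition. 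Finally I would identify $\mathcal{A}+\mathcal{B}_p$ with $\mathcal{C}_p$ on its stated domain: since $A_p$ and $I_p$ are bounded, $Dom(A_p)=H_1$ and $Dom(A_{p-1})\cap Dom(I_p)=Dom(A_{p-1})$, so that $H_1\times Dom(\mathcal{C}_{p-1})$ coincides with $Dom(A_p)\times(Dom(A_{p-1})\cap Dom(I_p))\times\cdots\times(Dom(A_1)\cap Dom(I_2))$, the domain of $\mathcal{C}_p$ described after \eqref{carma-matrix-op-C}. Since a $C_0$-semigroup is uniquely determined by its generator, the semigroup built this way is the $\{\mathcal{S}_p(t)\}_{t\geq 0}$ attached to $\mathcal{C}_p$, which closes the induction step (the base case $p=1$ being the hypothesis that $A_1$ generates $\{\mathcal{S}_1(t)\}_{t\geq 0}$).

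The argument is in essence bookkeeping and I do not expect a genuine obstacle; the one point that must be handled carefully is the block decomposition in the first step --- being precise that the lower-right $(p-1)\times(p-1)$ corner of \eqref{carma-matrix-op-C} is $\mathcal{C}_{p-1}$ in canonical form and that the two remaining entries are exactly $I_p$ and $A_p$, which are the operators on which the boundedness hypotheses are imposed. Once this is pinned down, the conclusion follows by combining the elementary product-semigroup observation with Proposition~\ref{prop:basic-semigroup-result}.
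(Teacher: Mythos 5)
Your proof is correct and follows essentially the same route as the paper: you write $\mathcal C_p=\mathcal A_p+\mathcal B_p$ with $\mathcal A_p=\mathrm{diag}(0,\mathcal C_{p-1})$ generating $\{\mathcal S_{p-1}^+(t)\}_{t\geq 0}$, and then invoke Proposition~\ref{prop:basic-semigroup-result}. The only difference is that you spell out the product-semigroup verification and the domain identification, which the paper leaves implicit.
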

\begin{proof}
By assumption, $I_p\in L(H_2,H_1)$ and $A_p\in L(H_1,H_p)$, and thus $\mathcal B_p\in L(H)$. Define 
$$
\mathcal A_p=\left[\begin{array}{cccc} 
0 & 0 & . . . & 0 \\ 
0 &   &   &  \\
.  &   &   &  \\
.  &   & \mathcal C_{p-1} & \\
0  &   &   &  \end{array}\right].
$$
Then, $\mathcal A_p+\mathcal B_p=\mathcal C_p$. Moreover, $\{\mathcal S_{p-1}^+(t)\}_{t\geq 0}$ is the
$C_0$-semigroup of $\mathcal A_p$. Hence, the result follows from Proposition~\ref{prop:basic-semigroup-result}. 
\end{proof}

As an example, consider $p=3$. Then we have 
$$
\mathcal C_3=\left[\begin{array}{ccc} 
0 & I_3 & 0 \\
0 & 0 & I_2 \\
A_3 & A_2 & A_1 
\end{array}\right].
$$
First, $\mathcal C_1=A_1$ is a (possibly unbounded) operator on $H_3$, with $C_0$-semigroup
$\{\mathcal S_1(t)\}_{t\geq 0}\subset L(H_3)$. Next, let
$$
\mathcal B_2=\left[\begin{array}{cc} 
0 & I_2 \\
A_2 & 0
\end{array}\right]
$$ 
where we assume $I_2\in L(H_3,H_2)$ and $A_2\in L(H_2,H_3)$ to have $\mathcal B_2\in L(H_2\times H_3)$. With
$$
\mathcal S_1^+(t)=\left[\begin{array}{cc}
\text{Id} & 0 \\
0 & \mathcal S_1(t) 
\end{array}\right],
$$
which defines a $C_0$-semigroup on $L(H_2\times H_3)$ with generator 
$$
\mathcal A_2=\left[\begin{array}{cc} 
0 & 0 \\
0 & A_1
\end{array}\right],
$$
we obtain 
$$
\mathcal S_2(t)=\mathcal S^+_1(t)+\sum_{n=1}^{\infty}\R_{n,2}(t)
$$
for $\R_{0,2}=\S_1^+(t)$ and
$$
\R_{n+1,2}(t)=\int_0^t\S_1^+(t_s)\mathcal B_2\R_{n,2}(s)\D s, n=1,2,\ldots.
$$
We note that $\{\mathcal S_2\}_{t\geq 0}\subset L(H_2\times H_3)$ is the $C_0$-semigroup with generator
$\mathcal C_2$ densely defined on $H_2\times H_3$. Finally, let
$$
\mathcal B_3=\left[\begin{array}{ccc}
0 & I_3 & 0 \\
0 & 0 & 0 \\
A_3 & 0 & 0
\end{array}\right]
$$
which is a bounded operator on $H$ after assuming $I_3\in L(H_2,H_1)$ and $A_3\in L(H_1,H_3)$. With
$$
\mathcal S_2^+(t)=\left[\begin{array}{cccc}
\text{Id} & 0 & & 0 \\
0 & & &  \\
  & & \mathcal S_2(t) &  \\
0 &  & & 
\end{array}\right],
$$
which is a $C_0$-semigroup on $L(H)$ with generator
$$
\mathcal A_3=\left[\begin{array}{cccc}
0 & 0 & & 0 \\
0 & & &  \\
  & & \mathcal C_2 &  \\
0 &  & & 
\end{array}\right],
$$
we conclude with 
$$
\mathcal S_3(t)=\mathcal S_2^+(t)+\sum_{n=1}^{\infty}\R_{n,3}(t),
$$
where $\R_{0,3}(t)=\mathcal S_2^+(t)$ and
$$
\R_{n+1,3}(t)=\int_0^t\mathcal S_2^+(t-s)\mathcal B_3\mathcal R_{n,3}(s)\D s.
$$
From this example we see that $A_2, A_3, I_2$ and $I_3$ must all be bounded operators, while only 
$A_1$ is allowed to be unbounded. By recursion in Proposition~\ref{prop:carma-generator}, we see that 
we must have $I_i\in L(H_{p+2-i},H_{p+1-i})$ and $A_i\in L(H_{p+1-i},H_p), i=2,3,\ldots,p$, and $A_1:Dom(A_1)\rightarrow H_p$ can be an 
unbounded operator with densely defined domain $Dom(A_1)\subset H_p$. 
 
We remark that Ch.~III in \cite{EN} presents a deep theory for perturbations of generators $\mathcal A$ by operators
$\mathcal B$. Matrix operators of the kind $\mathcal C_p$ for $p=2$ has been analysed in, for example \cite{Trunk}, 
where conditions for analyticity of the semigroup $\{\mathcal S_2(t)\}_{t\geq 0}$ is studied.

\section{Applications of CARMA processes}
\label{sec:ambit}

Recall Proposition~\ref{prop:OU-mild-sol}, and let $t_i:=i\cdot\delta$ for $i=0,1,\ldots$ and a given $\delta>0$. Define further
$\vec z_i:=\vec Z(t_i)$. By the semigroup property of $\{\mathcal S_p(t)\}_{t\geq 0}$ it holds,
\begin{align*}
\vec z_{i+1}&=\mathcal S_p(t_{i+1})\vec z_0+\int_0^{t_{i+1}}\mathcal S_p(t_{i+1}-s)\mathcal P^*_p\D L(s) \\
&=\mathcal S_p(\delta)\mathcal S_p(t_i)\vec z_i+\mathcal S_p(\delta)\int_0^{t_i}\mathcal S_p(t_i-s)\mathcal P_p^*\D L(s) \\
&\qquad+\int_{t_i}^{t_{i+1}}\mathcal S_p(t_{i+1}-s)\mathcal P_p^*\D L(s) \\
&=\mathcal S_p(\delta)\vec z_i+\vec{\epsilon}_i,
\end{align*}
with
$$
\vec{\epsilon}_i:=\int_{t_i}^{t_{i+1}}\mathcal S_p(t_{i+1}-s)\mathcal P_p^*\D L(s).
$$
The process above has the form of a discrete-time AR($1$) process.
Obviously, $\mathcal S_p(\delta)\in L(H)$ and by the independent increment property of the $H_p$-valued L\'evy process
$L$, $\{\vec\epsilon_i\}_{i=0}^{\infty}$ is a sequence of independent $H$-valued random variables. Furthermore, $\mathbb E[\vec\epsilon_i]=0$ due to the
zero-mean hypothesis of $L$. Finally, we can compute the covariance operators of $\vec\epsilon_i$ by appealing to the It\^o isometry
(cf. \cite[Cor.~8.17]{PZ})
\begin{align*}
\mathbb E[\langle\vec\epsilon_i,\vec x\rangle\langle\vec\epsilon_i,\vec y\rangle]
&=\int_{t_i}^{t_{i+1}}\langle Q\mathcal P_p\mathcal S_p^*(t_{i+1}-s)\mathcal P_1^*\vec x,\mathcal P_p\mathcal S_p^*(t_{i+1}-s)\mathcal P_1^*\vec y\rangle\D s \\
&=\int_0^{\delta}\langle\mathcal P_1\mathcal S_p(s)\mathcal P_p^*Q\mathcal P_p\mathcal S_p^*(s)\mathcal P_1^*\vec x,\vec y\rangle\D s,
\end{align*}
where $\vec x,\vec y\in H$. Thus, $\vec\epsilon_i$ has covariance operator $\mathcal Q_{\vec\epsilon}$ independent of $i$ given by
$$
\mathcal Q_{\vec\epsilon}=\int_0^{\delta}\mathcal P_1\mathcal S_p(s)\mathcal P_p^* Q\mathcal P_p\mathcal S_p^*(s)\mathcal P_1^*\D s.
$$
Therefore, $\{\vec\epsilon_i\}_{i=0}^{\infty}$ is an {\it iid} sequence of $H$-valued random variables. 
Hence, the $H$-valued time series 
$\{\vec z_i\}_{i=0}^{\infty}$ is a 
so-called {\it linear process} according to \cite{bosq}.

Let us now focus on the $H_1$-valued CAR($p$) dynamics in Def.~\ref{def:car}, and see how this
process can be related to a times series in $H_1$. To this end, recall the operator-valued polynomial
$Q_p(\lambda)$ introduced in \eqref{def:Q-poly} and the formal $p$th-order stochastic differential equation in \eqref{eq:car-pth-order-sde}. Let $\Delta_{\delta}$ be the forward differencing 
operator with time step $\delta>0$. Moreover, we assume $\Delta_{\delta}^n$ to be the $n$th order forward differencing, defined as
$$
\Delta_{\delta}^n f(t)=\sum_{k=0}^{n}\binom{n}{k}(-1)^kf(t+(n-k)\delta)
$$
for a function $f$ and $n\in\mathbb N$. Obviously, $\Delta_{\delta}^1=\Delta_{\delta}$. Introduce 
the discrete time grid $t_i:=i\delta$, $i=0,1,2,\ldots$, and observe that
$$
\frac{1}{\delta}\Delta_{\delta}I_p\cdots I_2 L(t_i)=\frac1{\delta}I_p\cdots I_2(L(t_{i+1})-L(t_i)).
$$
Assuming that the increments of $L$ belongs to the domain of $I_p\cdots I_2$, we find that 
\begin{equation}
\label{eq:carma-noise-discrete}
\epsilon_i:=\frac1{\delta}I_p\cdots I_2(L(t_{i+1})-L(t_i))
\end{equation}
for $i=0,1,2,\ldots$ define an {\it iid} sequence of $H_1$-valued random variables. We remark that this follows
from the stationarity hypothesis of a L\'evy process saying that  
the increments $L(t_{i+1})-L(t_i)$ are distributed as $L(\delta)$. The random
variables $\epsilon_i, i=0,1,\ldots,$ will be the numerical approximation of the formal expression 
$I_p\cdots I_2\dot{L}(t_i)$. Finally, we define (formally) a time series
$\{x_i\}_{i=0}^{\infty}$ in $H_1$ by
\begin{equation}
\label{def:euler-car}
Q_p\left(\frac{\Delta_{\delta}}{\delta}\right)x_i=\epsilon_i.
\end{equation}
In this definition, we use the notation $x_i=x(t_i)$ when applying the forward differencing operator $\Delta_{\delta}$. The 
polynomial $Q_p$ involves the linear operators $B_1,\ldots,B_p$ that may not be everywhere defined. We define the domain
$Dom(B)\subset H_1$ by
\begin{equation}
Dom(B):=Dom(B_1)\cap\cdots\cap Dom(B_p),
\end{equation}
which we assume to be non-empty. This will form the natural domain for the time series $\{x_i\}_{i=0}^{\infty}$. 
\begin{proposition}
\label{prop:euler}
Assume that for any $y_1,\ldots,y_p\in Dom(B)$, $B_1y_1+\cdots B_py_p\in Dom(B)$. If 
$\{\epsilon_i\}_{i=0}^{\infty}\subset Dom(B)$ with $\epsilon_i$ defined in \eqref{eq:carma-noise-discrete} and
$x_0,\ldots,x_{p-1}\in Dom(B)$, then $\{x_i\}_{i=0}^{\infty}$ is an AR($p$) process
in $H_1$ with dynamics
$$
x_{i+p}=\sum_{q=1}^p\widetilde{B}_qx_{i+(p-q)}+\delta^p\epsilon_i
$$
where 
$$
\widetilde{B}_q=(-1)^{q+1}\binom{p}{q}\text{Id}+\sum_{k=1}^q\delta^kB_k(-1)^{q-k}\binom{p-k}{q-k},
q=1,\ldots,p,
$$
and $\text{Id}$ is the identity operator on $H_1$.
\end{proposition}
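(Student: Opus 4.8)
The plan is to expand the definition of the operator polynomial $Q_p$ applied to the rescaled forward differencing operator $\Delta_\delta/\delta$, and to translate the resulting identity into a recursion for the sequence $\{x_i\}$. First I would write out $Q_p(\Delta_\delta/\delta)x_i$ using the definition \eqref{def:Q-poly}, namely
\begin{align*}
Q_p\left(\frac{\Delta_\delta}{\delta}\right)x_i&=\frac1{\delta^p}\Delta_\delta^p x_i-\sum_{k=1}^{p-1}B_k\frac1{\delta^{p-k}}\Delta_\delta^{p-k}x_i-B_p x_i.
\end{align*}
Then I would substitute the binomial expansion of the higher-order differences, $\Delta_\delta^n x_i=\sum_{j=0}^n\binom{n}{j}(-1)^j x_{i+n-j}$, into each term. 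Multiplying the defining relation \eqref{def:euler-car} through by $\delta^p$ gives
$$
\Delta_\delta^p x_i-\sum_{k=1}^{p}\delta^k B_k\,\Delta_\delta^{p-k}x_i=\delta^p\epsilon_i,
$$
where I have folded the last term $-B_px_i=-\delta^pB_p\Delta_\delta^0x_i/\delta^p$ into the sum by noting $\Delta_\delta^0x_i=x_i$.

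Next I would collect the coefficient of each shifted variable $x_{i+(p-q)}$ for $q=0,1,\ldots,p$. The leading difference $\Delta_\delta^p x_i$ contributes $\binom{p}{q}(-1)^q$ to $x_{i+(p-q)}$. The term $\delta^kB_k\Delta_\delta^{p-k}x_i$ contributes $\delta^kB_k\binom{p-k}{q-k}(-1)^{q-k}$ to $x_{i+(p-q)}$, since $x_{i+(p-q)}=x_{i+(p-k)-(q-k)}$ appears in $\Delta_\delta^{p-k}x_i$ with index $j=q-k$ (which requires $k\le q$, matching the summation range). For $q=0$ the total coefficient is $1$, isolating $x_{i+p}$; moving everything else to the right-hand side, the coefficient of $x_{i+(p-q)}$ for $q\ge 1$ becomes
$$
-\left[(-1)^q\binom{p}{q}+\sum_{k=1}^q\delta^kB_k(-1)^{q-k}\binom{p-k}{q-k}\right]=(-1)^{q+1}\binom{p}{q}\mathrm{Id}+\sum_{k=1}^q\delta^kB_k(-1)^{q-k+1}\binom{p-k}{q-k},
$$
and rewriting $(-1)^{q-k+1}=-(-1)^{q-k}$ — wait, I should double check the sign bookkeeping here against the claimed $\widetilde B_q$, which has $\delta^kB_k(-1)^{q-k}$; this means the $\delta^p\epsilon_i$ on the right and the sign conventions in $\Delta_\delta^n$ need to be reconciled carefully, and this is exactly the kind of routine-but-error-prone sign accounting that I would verify termwise.

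The remaining issue is to justify that the formal manipulation is legitimate given that the $B_k$ are unbounded. Here I would invoke the standing hypotheses of the proposition: since $x_0,\ldots,x_{p-1}\in Dom(B)$ and $\epsilon_i\in Dom(B)$, and since $Dom(B)$ is closed under the maps $(y_1,\ldots,y_p)\mapsto B_1y_1+\cdots+B_py_p$, an induction on $i$ shows that every $x_{i+p}$ produced by the recursion again lies in $Dom(B)$, so all the operators $B_k$ are applied only to elements of their common domain and the AR($p$) recursion is well-defined on $Dom(B)$. The main obstacle is thus not conceptual but combinatorial: getting the binomial coefficients $\binom{p-k}{q-k}$ and the alternating signs exactly right so that the collected coefficient matches the stated $\widetilde B_q$, and confirming the identity $\Delta_\delta^0=\mathrm{Id}$ is what supplies the $\binom{p}{q}\mathrm{Id}$ term for $k$ up to $q$ while $B_p$ slots in at $k=q=p$. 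Once the coefficient identity is checked for general $q$, the proposition follows immediately.
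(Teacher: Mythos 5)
Your proof follows the same route as the paper's: expand $Q_p(\Delta_\delta/\delta)x_i=\epsilon_i$ via the binomial expansion of $\Delta_\delta^n$, multiply through by $\delta^p$, isolate $x_{i+p}$ and collect the coefficient of each $x_{i+(p-q)}$, with the domain-preservation hypothesis giving well-definedness by induction. The sign question you flag resolves in your favour once you keep the minus signs with which the $B_k$ enter $Q_p$: the coefficient of $x_{i+(p-q)}$ on the left-hand side is $(-1)^q\binom{p}{q}-\sum_{k=1}^q\delta^kB_k(-1)^{q-k}\binom{p-k}{q-k}$, so after moving it to the right-hand side the $B_k$-part carries the sign $(-1)^{q-k}$ exactly as in the stated $\widetilde B_q$ --- your bracket $-\bigl[(-1)^q\binom{p}{q}+\sum_{k=1}^q\cdots\bigr]$ erroneously pulled the $B_k$-terms inside with a plus sign, which is where the spurious extra $(-1)$ came from.
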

\begin{proof}
First we observe that the assumption $B_1y_1+\cdots+B_py_p\in Dom(B)$ for any $y_1,\ldots,y_p\in Dom(B)$ is equivalent
with $\widetilde{B}_1y_1+\cdots+\widetilde{B}_py_p\in Dom(B)$ for any $y_1,\ldots,y_p\in Dom(B)$ since $\widetilde{B}_q$
is a linear combination of $B_1,\ldots,B_q$. Thus, by the assumptions, we see that $x_{i}\in Dom(B)$ for all $i=0,1,2...$ and
the recursion for the time series dynamics is well-defined.

We next show that the time series $\{x_i\}_{i=0}^{\infty}$ is indeed given by the recursion in the Proposition. 
From the definition of $Q_p$ and the forward differencing operators, we find after isolating $x_{i+p}$
on the left hand side and the remaining terms on the right hand side in the definition 
in Eq.~\eqref{def:euler-car} that 
\begin{align*}
x_{i+p}&=-\sum_{q}^{p}(-1)^{q}\binom{p}{q}x_{i+(p-q)}+\sum_{q=1}^{p-1}\delta^qB_q
\left(\sum_{k=0}^{p-q}(-1)^k\binom{p-q}{k}x_{i+(p-q-k)}\right) \\
&\qquad\qquad+\delta^pB_p x_i+\delta^p\epsilon_i.
\end{align*}
Identifying terms for $x_{i+(p-1)}, x_{i+(p-2)},\ldots,x_i$ yields the result.
\end{proof}
The time series $\{x_i\}_{i=0}^{\infty}$ defined in \eqref{def:euler-car} can be viewed as the 
numerical approximation of the $H_1$-valued CAR($p$) process $X(t)$. Notice that for small
$\delta$ we find that $\mathcal S_p(\delta)\approx\delta\mathcal C_p+\text{Id}$. Using this 
approximation in the explicit representation of $\mathbf Z(t)$ in Prop.~\ref{prop:OU-mild-sol} 
will yield the same conclusion as in our discussion above. 

We remark that if the operators $B_1,\ldots,B_p$ are bounded, then $Dom(B)=H_1$. In this case, the time series
$\{x_i\}_{i=0}^{\infty}$ will be everywhere defined on $H_1$, and we do not need to impose any additional "domain
preservation" hypothesis. 

Let us consider an example where $p=3$, and $H_1=H_2=H_3$. Suppose that $I_i=\text{Id}$
for $i=1,2,3$ and recall from the discussion in Section~\ref{sec:carma} that in this case
$B_q=A_q$ for $q=1,2,3$. Using Prop.~\ref{prop:euler} yields that
$$
x_{i+3}=(3\text{Id}+A_1)x_{i+2}+(A_2-2A_1-3\text{Id})x_{i+1}+(\text{Id}+A_1-A_2+A_3)x_i+\epsilon_i
$$
when $\delta=1$. Here, $\epsilon_i=L(t_{i+1})-L(t_i)$ and thus being distributed as $L(1)$. This
formula is the analogy of Ex.~10.2 in \cite{BSBK}. Indeed, Prop.~\ref{prop:euler} is
the generalization of  \cite[Eq.~(4.17)]{BSB} to Hilbert space. 

The $H_1$-valued AR($p$)-process in Prop.~\ref{prop:euler} is called a {\it functional 
autoregressive} process of order $p$ (or, in short-hand notation, FAR($p$)-process) by 
\cite{bosq}. For example, \cite{KR} apply such models in a functional data
analysis of Eurodollar futures, where they find statistical evidence for a FAR(2) dynamics. 
At this point, we would also like to mention that the stochastic wave equation considered in Sect.~\ref{sec:intro} will 
be an AR(2) process with values in $H_1$ (or a FAR(2)-process). Indeed, since in this case $A_1=0$, $I_2=\text{Id}$ and 
$A_2=\Delta$, the Laplacian, we find that $B_1=0$ and $B_2=\Delta$, and hence,
$$
x_{i+2}=2\text{Id}x_{i+1}-(\text{Id}-\delta^2\Delta)x_i+\delta^2\epsilon_i,
$$ 
for $i=0,1,2...$ Obviously, this recursion is obtained by approximating the wave equation by the discrete second derivative in time. 

Recalling from \eqref{eq:wave-semigroup} the semigroup $\{\mathcal S_2(t)\}_{t\geq 0}$ of the wave equation, we see 
from \eqref{car-explicit}
that it has the representation (with initial condition $\vec Z_0=\vec 0$)
$$
X(t)=\int_0^t(-\Delta)^{-1/2}\sin((-\Delta)^{1/2}(t-s))\D L(s).
$$
Following the analysis in \cite{BE}, $X$ will be a Hilbert-valued ambit field. Ambit fields have attracted a great deal of
attention as random fields in time and space suitable for modelling turbulence, say (see the seminal paper \cite{BNS} on
ambit fields and turbulence).  As $L$ is a $L^2(0,1)$-valued L\'evy process, one can represent it in terms of the basis 
$\{e_n\}_{n=1}^{\infty}$, where $e_n(x)=\sqrt{2}\sin(\pi n x)$, as
$$
L(t,x)=\sum_{n=1}^{\infty}\ell_n(t)e_n(x),
$$
with $\ell_n(t):=\langle L(t,\cdot),e_n\rangle_2, n=1,2,\ldots$ being real-valued square-integrable L\'evy processes
with zero mean (see \cite[Sect.~4.8]{PZ}). Thus, the stochastic wave equation has the representation
$$
X(t,x)=\sum_{n=1}^{\infty}\frac{\sqrt{2}}{\pi n}\int_0^t\sin(\pi n(t-s))\D\ell_n(s) \sin(\pi n x)
$$ 
which becomes an example of an ambit field. Hilbert-valued CARMA($p,U,\mathcal L_U$)-processes provide us with a rich class
of ambit fields, as real-valued CARMA processes are specific cases of L\'evy semistationary processes (see e.g. \cite{bnbv,BSB,BSBK}).

\begin{acknowledgement}
Financial support from the project FINEWSTOCH, funded by the Norwegian Research Council, is gratefully acknowledged.
\end{acknowledgement}

\end{document}